\newcommand{\pa}{\partial}
\newcommand{\fp}{\mathfrak{P}}
\newcommand{\cN}{{\mathcal{N}}}
\newcommand{\R}{\mathbb{R}}
\newcommand{\cL}{\mathcal{L}}
\renewcommand{\epsilon}{\varepsilon}
\newtheorem{theorem}{Theorem}[section] % 1st argument is your name for it
\newtheorem{corollary}[theorem]{Corollary}
\title[One can hear the corners of a drum]% end with percent
 {One can hear the corners of a drum} % This is the full title of the paper
\author{Z. Lu and J. Rowlett}
\begin{document}
\maketitle

\begin{abstract}
We prove that the presence or absence of corners is spectrally determined in the following sense:  any simply connected domain with piecewise smooth Lipschitz boundary cannot be isospectral to any connected domain, of any genus, which has smooth boundary.  Moreover, we prove that amongst all domains with Lipschitz, piecewise smooth boundary and fixed genus, the presence or absence of corners is uniquely determined by the spectrum.  This means that corners are an elementary geometric spectral invariant; one can hear corners.  
\end{abstract}

%\part{Use this type of header for very long papers only}
% use lowercase except for proper names

\section{Introduction} % use lowercase except for proper names
\label{intro}

The isospectral problem is:  if two compact Riemannian manifolds $(M, g)$ and $(M', g')$ have the same Laplace 
spectrum, then are they isometric?  In this generality the answer was first proven to be ``no'' by Milnor \cite{milnor}.  He used a construction of lattices $L_1$ and $L_2$ by Witt \cite{witt} such that the corresponding flat tori $\R^{16}/L_i$ have the same Laplace spectrum for $i=1$, $2$, but are not isometric.  Although the tori are not congruent, the fact that they are isospectral implies a certain amount of rigidity on their geometry.  This rigidity is due to the existence of \em geometric spectral invariants, \em geometric features which are determined by the spectrum.  

The first geometric spectral invariant was discovered by Hermann Weyl in 1912 \cite{weyl}:  the $n$-dimensional volume of a compact $n$-manifold is a spectral invariant.  If the manifold has boundary, then the $n-1$ dimensional volume of the boundary is also a spectral invariant, as shown by Pleijel \cite{pleijel} in 1954.  McKean and Singer \cite{ms} proved that certain curvature integrals are also spectral invariants.  In the particular case of smooth surfaces and smoothly bounded domains, they and M. Kac \cite{kac} independently proved that the Euler characteristic is a spectral invariant.  

M. Kac popularised the isospectral problem for planar domains by paraphrasing it as ``Can one hear the shape of a drum?''  This makes physical sense because if a planar domain is the head of a drum, then the sound the drum makes is determined by the spectrum of the Laplacian with Dirichlet boundary condition.  Therefore, if two such drums are isospectral, meaning they have the same spectrum for the Laplacian with Dirichlet boundary condition, then they sound the same.  Do they necessarily have the same shape?  The negative answer was demonstrated by Gordon, Webb, and Wolpert \cite{gww} who constructed  two polygonal domains which are isospectral but not congruent; see Figure 1.  

 \begin{figure} \label{drum}\includegraphics[width=250pt]{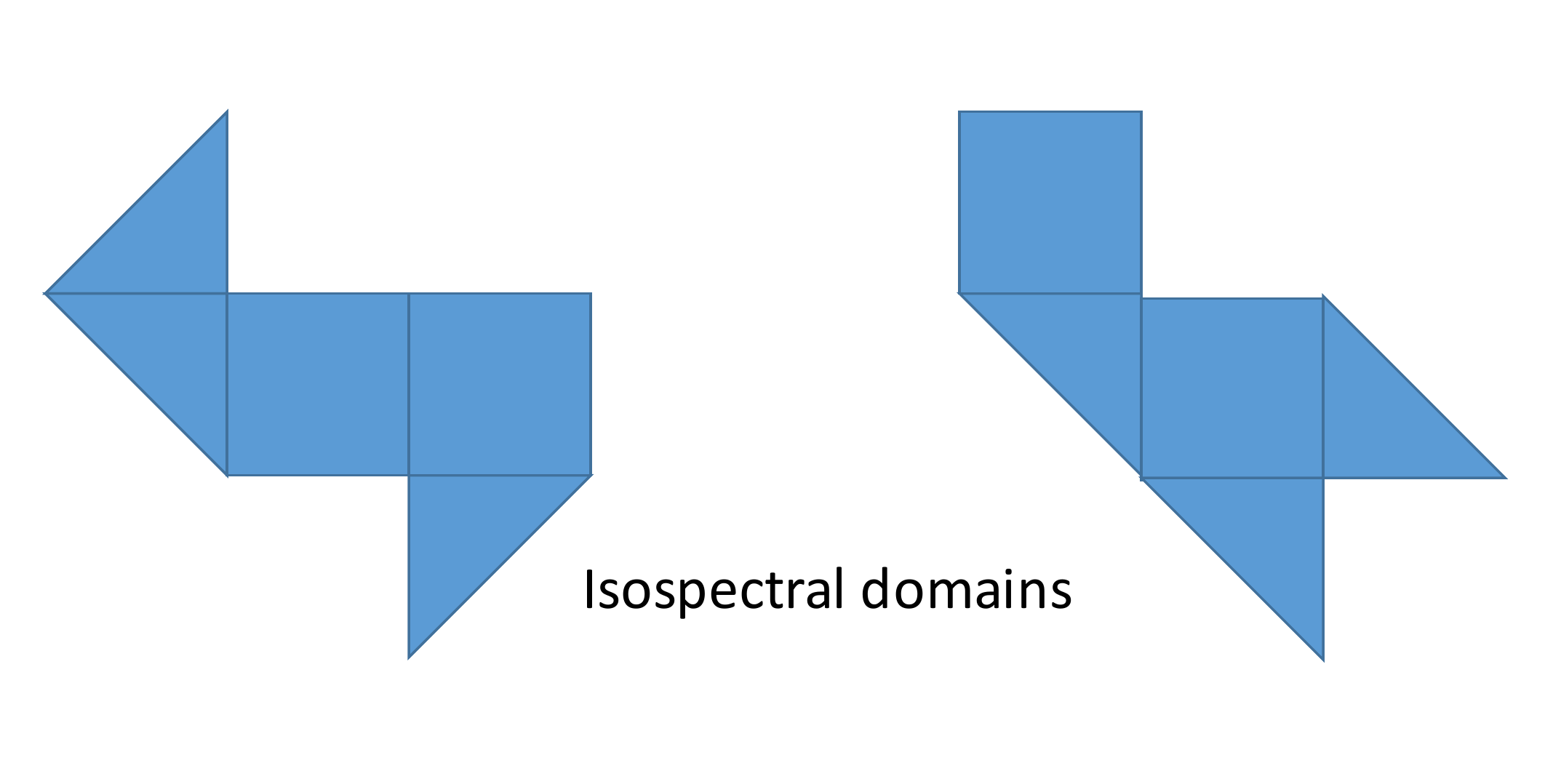} \caption{Identical sounding drums} \end{figure}

On the other hand, the isospectral problem has a positive answer in certain settings.  It is easy to prove that if two rectangular domains are isospectral, then they are congruent.  It requires more work to prove that if two triangular domains are isospectral, then they are congruent.  That was demonstrated in Durso's technically demanding doctoral thesis at MIT who used both the wave and heat traces \cite{durso}.  A simpler proof using only the heat trace was discovered by Grieser and Maronna \cite{gm}.  Zelditch proved \cite{zelditch}   that if two bounded domains with analytic boundary satisfying a certain symmetry condition are isospectral, then they are congruent. 

We prove that the smoothness of the boundary, or equivalently the lack thereof, is a spectral invariant in the following sense:  the spectrum determines whether or not a simply connected domain has corners.  

\begin{theorem} Let $\Omega$ be a simply connected domain with piecewise smooth Lipschitz boundary.  If $\Omega$ has at least one corner, then $\Omega$ is not isospectral to any bounded domain with smooth  boundary which has no corners.  \end{theorem}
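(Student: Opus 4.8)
The plan is to compare the short-time asymptotics of the Dirichlet heat traces of the two domains. Write $Z_\Omega(t) = \sum_j e^{-\lambda_j t}$ for the heat trace. For a smoothly bounded planar domain the classical expansion of McKean--Singer and Kac gives, as $t \to 0^+$,
$$Z_\Omega(t) = \frac{|\Omega|}{4\pi t} - \frac{|\partial\Omega|}{8\sqrt{\pi t}} + \frac{1}{12\pi}\int_{\partial\Omega}\kappa\,ds + o(1),$$
where $\kappa$ is the geodesic curvature of the boundary. The first step is to establish the analogous expansion when $\Omega$ has corners: each corner of interior angle $\gamma$ contributes an additional term $\tfrac{\pi^2-\gamma^2}{24\pi\gamma}$ to the constant coefficient, while the smooth arcs still contribute their curvature integral. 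Thus for a simply connected $\Omega$ with corner angles $\gamma_1,\dots,\gamma_n$,
$$Z_\Omega(t) = \frac{|\Omega|}{4\pi t} - \frac{|\partial\Omega|}{8\sqrt{\pi t}} + \left( \frac{1}{12\pi}\int_{\partial\Omega}\kappa\,ds + \sum_{i=1}^n \frac{\pi^2-\gamma_i^2}{24\pi\gamma_i}\right) + o(1).$$

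Next I would invoke the Gauss--Bonnet theorem for a flat domain with piecewise smooth boundary, $\int_{\partial\Omega}\kappa\,ds + \sum_i(\pi-\gamma_i) = 2\pi\chi(\Omega)$, to eliminate the curvature integral. Since $\Omega$ is simply connected, $\chi(\Omega)=1$, and substituting into the constant coefficient yields, after the elementary identity $-\tfrac{\pi-\gamma}{12\pi}+\tfrac{\pi^2-\gamma^2}{24\pi\gamma}=\tfrac{(\gamma-\pi)^2}{24\pi\gamma}$, the clean expression
$$a_0(\Omega) = \frac{1}{6} + \sum_{i=1}^n \frac{(\gamma_i-\pi)^2}{24\pi\gamma_i}.$$
The key point is that every genuine corner ($\gamma \neq \pi$, with $\gamma\in(0,2\pi)$ by the Lipschitz hypothesis) contributes a strictly positive amount, so $a_0(\Omega) > \tfrac16$ as soon as $\Omega$ has at least one corner. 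As a sanity check, the square gives four equal contributions summing to $\tfrac{1}{12}$, so $a_0 = \tfrac14$, recovering Kac's value for rectangles.

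Now suppose, for contradiction, that $\Omega$ is isospectral to a smoothly bounded domain $\Omega'$ with no corners. For $\Omega'$ the McKean--Singer/Kac expansion combined with Gauss--Bonnet gives $a_0(\Omega') = \tfrac16\chi(\Omega')$. Isospectrality forces the heat traces, hence every coefficient of their asymptotic expansions, to agree; comparing constant terms yields $\tfrac16\chi(\Omega') = \tfrac16 + \sum_i \tfrac{(\gamma_i-\pi)^2}{24\pi\gamma_i} > \tfrac16$, so $\chi(\Omega') > 1$. But any compact connected surface with nonempty boundary satisfies $\chi = 2-2g-b \le 1$ (genus $g\ge 0$, number of boundary components $b\ge 1$), with equality only for the disk. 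This contradiction completes the argument, and because the bound $\chi(\Omega')\le 1$ holds for arbitrary genus, it covers $\Omega'$ of any genus exactly as asserted.

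The main obstacle is the first step: rigorously justifying that each corner contributes precisely $\tfrac{\pi^2-\gamma^2}{24\pi\gamma}$, independently of the curvature of the two smooth arcs meeting there. For straight-sided corners this is Kac's formal computation, made rigorous by Fedosov and by van den Berg--Srisatkunarajah using the exact heat kernel on an infinite wedge. For curvilinear corners one must show the contribution is still governed by the model wedge of opening angle $\gamma$. I would prove a locality principle for the constant term, constructing a parametrix that near each corner is compared to the exact wedge kernel and along each smooth arc to the half-plane kernel with a curvature correction, and then controlling the error uniformly as $t\to 0^+$. Establishing this locality, together with the requisite uniform off-diagonal heat kernel estimates on a Lipschitz domain, is the technical heart of the proof.
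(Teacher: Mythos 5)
Your overall strategy coincides with the paper's: both arguments isolate the constant term $a_0$ of the short-time heat trace, show $a_0>\tfrac16$ for a simply connected domain with at least one corner, and observe that $a_0=\chi(\Omega')/6\le\tfrac16$ for any smoothly bounded $\Omega'$ of any genus, so the traces cannot agree. Your algebra is equivalent but cleaner: the paper writes $a_0=\tfrac{1}{24}\sum_k(x_k^{-1}+x_k)-\tfrac{n}{12}+\tfrac16$ with $x_k=\theta_k/\pi$ and minimizes $f(x)=\sum_k(x_k^{-1}+x_k)$ over $x_k>0$, whereas your completing-the-square identity exhibits each corner's net contribution as the manifestly positive quantity $(\gamma_i-\pi)^2/(24\pi\gamma_i)$; these are the same inequality. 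The genuine divergence is in the step you rightly flag as the technical heart and leave as a sketch: justifying the corner contribution for \emph{curvilinear} corners. You propose a direct parametrix construction, gluing the exact wedge kernel near each corner to a curvature-corrected half-plane kernel along the smooth arcs, with uniform error control on a Lipschitz domain. The paper instead sidesteps this by a sandwich argument: it builds comparison domains $\Omega_-\subset\Omega\subset\Omega_+$ agreeing with $\Omega$ outside an $\epsilon$-neighborhood of the corner but having \emph{straight-edged} corners of angles $\theta_-<\theta<\theta_+$, uses domain monotonicity of Dirichlet eigenvalues to trap $h(t)$ between $h_\pm(t)$, evaluates $h_\pm$ via the known van den Berg--Srisatkunarajah straight-wedge trace, and lets $\epsilon\downarrow0$ so that $\theta_\pm\to\theta$, the passage to the limit being legitimate because the $O(\sqrt t)$ remainders are uniform (controlled by the curvature of the boundary). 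The monotonicity trick buys the curved-corner case with essentially no new parametrix analysis; your route should also work but is more laborious, and as written it is the one part of your proposal that is not actually carried out.
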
 
 
%%%%%%  
%-Mention(?) question about convex problem, smoothly bounded problem - now at least know that no polygonal domain can be isospectral to a smoothly bounded one - regardless of convexity.  
%%%%%%%%%%%%%%%%%

In fact, the proof of Theorem 1 implies the following stronger result. 

\begin{corollary} Amongst all planar domains of fixed genus with piecewise smooth Lipschitz boundary, those which have at least one corner are spectrally distinguished.  \end{corollary}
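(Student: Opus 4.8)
The plan is to use the heat trace asymptotics, which provide spectral invariants through the short-time expansion of the heat kernel. For a domain $\Omega$ with smooth boundary, the heat trace admits an asymptotic expansion as $t \to 0^+$ of the form
\begin{equation}
\sum_{j} e^{-\lambda_j t} \sim \frac{|\Omega|}{4\pi t} - \frac{|\partial \Omega|}{8\sqrt{\pi t}} + \frac{1}{6}\chi(\Omega) + O(\sqrt{t}),
\end{equation}
where the constant term involves the Euler characteristic via the Gauss--Bonnet integral of the boundary curvature. When corners are present, each interior corner of angle $\gamma$ contributes an additional term of the form $(\pi^2 - \gamma^2)/(24\pi\gamma)$ to the constant coefficient. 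The key point is that these corner contributions are \emph{positive} for angles $\gamma < \pi$ and, crucially, they enter the constant term of the expansion in a way that distinguishes the cornered case from the smooth case.

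First I would fix the genus of the domain to be some value $g$, so that among all domains of that genus the topological contribution $\chi(\Omega)$ is constant. For a planar domain of genus $g$ (i.e.\ with $g$ holes), the Euler characteristic is $1 - g$, and by Gauss--Bonnet the smooth boundary curvature integrals combine with the corner angles to yield the constant heat coefficient. Specifically, for a domain with smooth boundary the constant term is exactly $\tfrac{1}{6}\chi(\Omega)$, whereas for a domain with the same area, same boundary length, same genus, but possessing corners at angles $\gamma_1, \dots, \gamma_k$, the constant term becomes $\tfrac{1}{6}\chi(\Omega) + \sum_{i} \frac{\pi^2 - \gamma_i^2}{24\pi\gamma_i}$. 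Since isospectral domains share all heat coefficients, the constant terms must agree; I would then argue that the corner sum cannot vanish.

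The main step, and the expected obstacle, is showing that the corner contribution $\sum_i \frac{\pi^2 - \gamma_i^2}{24\pi\gamma_i}$ is strictly nonzero whenever at least one genuine corner is present. The function $\gamma \mapsto \frac{\pi^2 - \gamma^2}{24\pi\gamma}$ is strictly positive for $0 < \gamma < \pi$ and strictly negative for $\pi < \gamma < 2\pi$, so the individual terms can in principle cancel; the difficulty is ruling out such cancellation for a Lipschitz piecewise-smooth boundary. I expect this is handled by a convexity or strict-monotonicity argument together with the Lipschitz constraint on the boundary, which forbids cusps and restricts the admissible angles, thereby preventing the total corner contribution from summing to zero when any corner is present. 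Because the smooth domain contributes exactly zero corner term while the cornered domain contributes a nonzero term, equality of the constant heat coefficients is violated, so the two cannot be isospectral.

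Once the nonvanishing of the corner term is established, the corollary follows immediately: among all domains of fixed genus with piecewise smooth Lipschitz boundary, the constant coefficient in the heat trace equals $\tfrac16\chi(\Omega)$ plus the corner sum, and since the genus (hence $\chi$) is fixed, two such domains are isospectral only if their corner sums coincide. A domain with no corners has corner sum zero, so it can never be isospectral to one whose corner sum is nonzero, which separates the cornered domains from the smooth ones within each fixed-genus class. The heat-trace argument thus gives both the nonexistence of isospectral smooth partners (Theorem~1) and the sharper statement that cornered domains form a spectrally distinguished subclass at fixed genus (the corollary).
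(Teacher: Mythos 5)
There is a genuine gap, and it sits exactly where you flagged ``the expected obstacle.'' Your formula for the constant term of a cornered domain --- $\tfrac16\chi(\Omega) + \sum_i \frac{\pi^2-\gamma_i^2}{24\pi\gamma_i}$ --- is not correct, because once corners are present the curvature integral over the smooth part of the boundary is no longer $2\pi\chi(\Omega)$. The Gauss--Bonnet theorem for a piecewise smooth boundary gives $\int_{\pa\Omega\setminus\fp} k\,ds = \sum_j \theta_j + \pi(2\chi(\Omega)-n)$, so the curvature term itself shifts when corners appear. Moreover, the quantity you hope to show is nonzero, $\sum_i \frac{\pi^2-\gamma_i^2}{24\pi\gamma_i}$, genuinely \emph{can} vanish: it is positive for convex angles and negative for reflex angles, and a Lipschitz piecewise-smooth boundary happily admits both (the Lipschitz condition only excludes $\gamma=0$ and $\gamma=2\pi$), so one convex and one reflex corner can be chosen to cancel exactly. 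No convexity or monotonicity argument on that sum alone will close the gap.

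The paper's resolution is precisely the bookkeeping you skipped: substituting the Gauss--Bonnet identity into the constant coefficient yields
\[
a_0 = \frac{\chi(\Omega)}{6} - \frac{n}{12} + \frac{1}{24}\sum_{k=1}^n\left(\frac{1}{x_k}+x_k\right), \qquad x_k = \frac{\theta_k}{\pi},
\]
so that each corner's \emph{net} contribution (corner defect plus the induced change in the curvature integral) is $\frac{1}{24}\bigl(\tfrac{1}{x_k}+x_k\bigr)-\frac{1}{12} = \frac{(1-x_k)^2}{24 x_k} = \frac{(\pi-\theta_k)^2}{24\pi\theta_k}$, which is strictly positive for every genuine corner, convex or reflex. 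Hence $a_0 > \chi(\Omega)/6$ whenever at least one corner is present, while $a_0 = \chi(\Omega)/6$ for a smooth boundary of the same genus; no cancellation is possible because every summand has a sign. Your overall strategy (compare the constant heat coefficient at fixed genus) is the paper's, but without the Gauss--Bonnet correction the key positivity claim is false as stated.
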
 

The corollary means, in the spirit of Kac, that ``one can hear corners.''  Whereas many spectral invariants are complicated abstract objects, there are relatively few known elementary geometric spectral invariants.  Although experts recognize that polygonal domains have an extra, purely local corner contribution to the heat trace, we are not aware of this having been exploited previously to show that corners are spectrally determined.  It therefore appears that we have found the next elementary geometric spectral invariant for simply connected domains since {\cite{kac} and \cite{ms}} showed, half a century ago, that the number of holes is a spectral invariant for smoothly bounded domains.

\section{Proofs}

  \begin{figure}  \includegraphics[width=200pt]{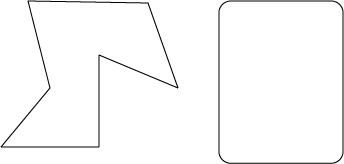} \caption{No polygonal domain is isospectral to a smoothly bounded domain.  One can hear that the left domain has corners, and one can also hear that the right domain has no corners.}   \end{figure} 

\subsection{Preliminaries}
Let $\Omega$ be a bounded domain in the plane.  The Laplace equation with Dirichlet boundary condition is 
\[u_{xx} + u_{yy} = - \lambda u; \quad u|_{\pa \Omega} = 0.\]
The numbers $\lambda$ such that there exists such a solution $u$ are the eigenvalues of the Laplace operator, and the set of all these eigenvalues is the spectrum, which satisfies
\[0 < \lambda_1 < \lambda_2\leq  \cdots \uparrow \infty.\]
The rate at which $\lambda_k \to \infty$ as $k \to \infty$ was determined by Weyl, and is known as Weyl's law \cite{weyl}, 
\[\lim_{k \to \infty} \frac{|\Omega| \lambda_k}{4\pi k} = 1.\]
Above, $|\Omega|$ denotes the area of $\Omega$.  

Positive isospectral results, such as \cite{durso} and \cite{gm},  often employ the heat trace, 
\[h(t) := \sum_{k \geq 1} e^{-\lambda_k t}.\]
Since this quantity is determined by the spectrum, it is a spectral invariant.  It can be shown using Weyl's law that the heat trace is asymptotic to 
\[h(t) \sim \frac{|\Omega|}{4 \pi t} \quad t \downarrow 0.\]
It is also possible to compute the next two terms in the short time asymptotic expansion of the heat trace.  Pleijel proved \cite{pleijel} that  
\[h(t) \sim \frac{|\Omega|}{4 \pi t} - \frac{|\pa \Omega|}{8 \sqrt{\pi t}}, \quad t \downarrow 0.\]
Above $|\pa \Omega|$ denotes the perimeter of $\Omega$.  The next term in the expansion, which we denote by $a_0$, is the key to the proof of our results, and so we briefly explain how to compute it.

As the name suggests, the heat trace is equivalently defined by taking the trace of the heat kernel, that is the fundamental solution of the heat equation.  For an $\cL^2$ orthonormal basis of eigenfunctions $\{\varphi_k\}_{k \geq 1}$ of the Laplacian with corresponding eigenvalues $\{ \lambda_k \}_{k \geq 1}$, the heat kernel is 
$$H(x,y,x', y', t) = \sum_{k \geq 1} e^{-\lambda_k t} \varphi_k (x,y) \varphi_k(x', y').$$
Another way to construct the heat kernel is to begin with an approximate solution to the heat equation, or parametrix, and then solve away the error term using the heat semi-group property and Duhamel's principle; for a manifold without boundary, see for example Chapter 3 of \cite{rose}.  A fundamental property of the heat kernel is that it is asymptotically local for short times. Kac referred this locality principle as ``not feeling the boundary'' \cite{kac}.  He exploited this principle to compute the first three terms in the heat trace expansion as $t \downarrow 0$.  

%This locality principle can also be understood through the parametrix construction of the heat kernel.  The key point is that the error of the local construction compared with the actual heat kernel is, along the diagonal $(x,y) = (x', y')$, rapidly decaying as $t \downarrow 0$ (c.f. Lemma 2.2 of \cite{htap}).  

To show how the third term, $a_0$, in the asymptotic expansion of the heat trace for $t \downarrow 0$  arises, we must also outline the calculation of the first two terms.  The first term comes from considering the most elementary approximation to the heat kernel for a planar domain:  the Euclidean heat kernel on $\R^2$, 
\[\frac{e^{|(x,y) - (x', y')|^2/4t}}{4 \pi t}.\]
By the locality principle, (see also Lemma 2.2 of \cite{htap}) the trace of the actual heat kernel for the domain $\Omega$ is asymptotically equal to the corresponding trace of the Euclidean heat kernel.  Along the diagonal, $(x,y) = (x', y')$, so the Euclidean heat kernel is simply 
\[\frac{1}{4\pi t}.\]
The integral over the domain $\Omega$ is therefore $\frac{|\Omega|}{4 \pi t}$.  Consequently, the leading order asymptotic of the heat trace is 
\[h(t) \sim \frac{|\Omega|}{4 \pi t}, \quad t \downarrow 0.\]
%For more details we refer to Kac \cite{kac}.

Whereas the Euclidean heat kernel is a good parametrix on the interior, it does not ``feel the boundary.''  Since we assume the boundary is piecewise smooth, in a small neighborhood of each boundary point away from the corner points, the heat kernel for a half plane with Dirichlet boundary condition is a good parametrix, meaning it is a good approximation to the actual heat kernel for the domain.  For the $x \geq 0$ half plane, this heat kernel is 
\[\frac{1}{ 4 \pi t} \left( e^{-\frac{(x - x')^2}{4t}} - e^{-\frac{(x + x')^2}{4t}} \right) e^{-\frac{(y-y')^2}{4t}}.\]

For curved boundary neighborhoods away from any corners, since the boundary is smooth, it is a one-dimensional manifold.  Therefore the smaller the boundary neighborhood is, the closer the boundary is to being straight in this neighborhood.  Consequently, by the locality principle, the trace of the heat kernel for the half plane over this neighborhood  is asymptotically equal to the trace of the heat kernel for $\Omega$ along the same neighborhood.  We calculate that this results in two terms
%Integrating this heat kernel along the diagonal over a neighborhood $\cN$ of the boundary now results in two terms
\[\frac{|\cN|}{4 \pi t} - \frac{|\pa \cN|}{8 \sqrt{\pi t}}.\]
Above $|\cN|$, denotes the area of the neighbourhood, and $|\pa \cN|$ denotes the length of the boundary along this neighbourhood.  Since the first term gives the same local contribution as that of the Euclidean heat kernel, cumulatively we arrive at the first two terms in the asymptotic expansion of the heat trace which was originally demonstrated by Pleijel \cite{pleijel} 
\[h(t) \sim \frac{|\Omega|}{4 \pi t} - \frac{|\pa \Omega|}{8 \sqrt{\pi t}}, \quad t \downarrow 0.\]

In the case of smooth boundary, the next term in the heat trace expansion is best understood by the local parametrix method as in Chapter 3 of \cite{rose}.  McKean and Singer \cite{ms} and Kac \cite{kac} independently proved that the next term is 
\[\frac{1}{12 \pi} \int_{\pa \Omega} k ds, \]
where $k$ is the geodesic curvature of the boundary (see also Proposition 1 of \cite{wat}). By the Gauss-Bonnet Theorem, this integral is equal to $2 \pi \chi(\Omega)$, where $\chi(\Omega)$ is the Euler characteristic of $\Omega$.  So, if there are no corners, then the heat trace 
\[h(t) \sim \frac{|\Omega|}{4 \pi t} - \frac{|\pa \Omega|}{8 \sqrt{\pi t}} + \frac{\chi(\Omega)}{6}, \quad t \downarrow 0.\]
\begin{remark} The difference between $h(t)$ and the above expansion is $O(\sqrt{t})$ as $t \downarrow 0$.  This bound depends only on the $\cL^2$ norm of the curvature of the boundary; see $D_3$ on p. 449 of \cite{wat}.  \end{remark}

For smoothly bounded domains, the number of holes is equal to $1- \chi(\Omega)$, so this shows that the number of holes in the domain is an elementary geometric spectral invariant \em within the class of  smoothly bounded domains.  \em    Interestingly, it is not known whether or not this is a spectral invariant for domains which may have corners.    
The reason is that if there are corners, then these each contribute an extra, purely local term, to the coefficient $a_0$.  This has been called a ``corner defect'' or an ``anomaly'' \cite{htap} due to the fact that the coefficient $a_0$ is not continuous under Lipschitz convergence of domains.

This heat trace coefficient $a_0$ for a domain with polygonal boundary can be computed using the trace of a suitable parametrix in a neighborhood of the corner.  This is precisely what Kac did \cite{kac} using a formula for the heat kernel of a wedge.  His method unfortunately requires that the corner, and the wedge which opens with the same angle as the corner, are convex.  So, the opening angle may only be between $0$ and $\pi$.  Since we do not require convexity, that method will not suffice.  

%%%%%%%%%%%%%

A more general method follows an idea of D. B. Ray.  Apparently, this calculation was previously performed by Fedosov and published in Russian \cite{fed}.  The idea was to express the Green's function for a wedge, with any opening angle between $0$ and $2\pi$, explicitly as a Kontorovich-Lebedev transform using special functions.  The heat kernel is the inverse Laplace transform of the Green's function, so one can compute its trace using these special functions and integral transformations.  Although Ray's work was never published, and Fedesov's work poses a difficulty to those who cannot read Russian, van den Berg and Srisatkunarajah independently worked through this calculation and published it in \S 2 of \cite{vdbs}.  We have verified their calculation using the reference \cite{gr} by Gradsteyn and Ryzhik on special functions and integral transformations.  

Although van den Berg's and Srisatkunarajah's calculation of the corner contribution to the heat trace was for polygonal boundary, we will show below how to compute for a \em generalized corner.  \em 

%Since we are considering domains with piecewise smooth boundary which may be curved, we shall first precisely define what we mean by a \em corner, \em or rather, a \em generalized corner.  \em 

%\added[id=zl]{Add some words like, up to $O(1/t^{-1/2})$, the smooth boundary and piecewise smooth boundary are the same, then say there is the third term that makes the difference. that is, it is NOT the limit of the smooth approxmation}

\begin{definition} A \em generalized corner \em is a point $p$ on the boundary at which the  following are satisfied.
\begin{enumerate}
\item There exist two tangent vectors based at $p$, $v_1$ and $v_2$, which meet at an angle not equal to $\pi$ such that this angle is the \em opening angle \em at the corner in the following sense.  
\item The boundary in a neighborhood of $p$ is defined by a continuous curve $\gamma(t): (-a, a) \to \R^2$ for $a > 0$ with $\gamma(0) = p$, such that $\gamma$ is smooth {on $(-a,0]$ and $[0,a)$} and  such that 
\[\lim_{t \uparrow 0} \dot \gamma (t) = v_1, \quad \lim_{t \downarrow 0} \dot \gamma  (t) = v_2.\]
\end{enumerate}
\end{definition} 

%\added[id=zl]{in stead of saying that it is smooth for $t\neq 0$, it is better to say that it is smooth , up to boundary on (-a,0] and [0,a). We need the boundedness of the curvtute }

We use the term  generalized corner, because often a corner is assumed to have straight, non-curved edges in some neighborhood of the corner point.  However, we only require that the boundary curve is asymptotically straight on each side of the corner point.  It is interesting to note that a generalized corner, which may have curved boundary, nevertheless contributes the same purely local term in the heat trace like a classical straight corner. 

%%%%%%%%%%%%%%%
  \begin{figure} \caption{A domain with a generalized corner at the point $p$.}  \begin{center}  \includegraphics[width=50pt]{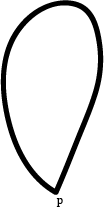} \end{center} \end{figure} 
%%%%%%%%%%%

%\added[id=zl]{the below proof has some problem. the whole gist of this paper is that the heat kernel expansion is not 'continuous', otherwise domain with corners may not be distiguishable. However, we use the continuity in the proof.}

\begin{theorem} A generalized corner as defined above with opening angle $\theta$ gives a purely local contribution to the heat trace, 
\[\frac{\pi^2 - \theta^2}{24 \pi \theta}.\]
For a bounded domain $\Omega$ with piecewise smooth boundary which may have a finite set of generalized corners, the heat trace 
\[h(t) \sim \frac{ |\Omega|}{4 \pi t} - \frac{| \pa \Omega|}{8 \sqrt{\pi t}} + \frac{1}{12 \pi} \int_{\pa \Omega \setminus \fp} k ds+ \sum_{j=1} ^n \frac{\pi^2 - \theta_j^2}{24 \pi \theta_j}, \quad t \downarrow 0.\]
Above $\fp = \{p_j\}_{j=1} ^n$ is the set of corner points of $\pa \Omega$, with corresponding interior angles $\theta_j$.  
\end{theorem}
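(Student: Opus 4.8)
The plan is to establish the corner contribution via a localization argument, then assemble the full expansion by combining the interior, smooth-boundary, and corner pieces. First I would invoke the locality principle (``not feeling the boundary'') to reduce the computation of the corner coefficient to a purely local model problem: in a small neighborhood of a generalized corner $p$ with opening angle $\theta$, the heat kernel of $\Omega$ is, up to errors that are negligible as $t \downarrow 0$, approximated by the heat kernel of an infinite wedge $W_\theta$ of the same opening angle. The key point here, and what distinguishes the generalized corner from a straight one, is that the curvature of the two edges near $p$ contributes only to the smooth-boundary integral $\frac{1}{12\pi}\int k\,ds$ and not to the leading corner term; I would justify this by a scaling/rescaling argument, noting that as the neighborhood shrinks each edge becomes asymptotically straight (its tangent tends to $v_1$, $v_2$), so the difference between the true boundary and the straight wedge is of lower order in the short-time trace.

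Next I would carry out the wedge computation itself. Following the Ray--Fedosov--van den Berg--Srisatkunarajah approach, I would write the Green's function for the wedge $W_\theta$ with Dirichlet condition as a Kontorovich--Lebedev transform in terms of modified Bessel functions, recover the heat kernel as the inverse Laplace transform of the Green's function, and compute the \emph{excess} trace
\[
\int_{W_\theta}\bigl(H_{W_\theta}(x,x,t) - H_{\R^2}(x,x,t)\bigr)\,dx,
\]
i.e. the wedge heat trace minus the contributions already accounted for by the area and straight-edge terms. The contour integration and Bessel-function identities from Gradshteyn--Ryzhik yield, in the limit $t \downarrow 0$, the constant $\frac{\pi^2-\theta^2}{24\pi\theta}$, independent of $t$. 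This is the technical heart of the argument, and the place where convexity is \emph{not} required: because the wedge Green's function is valid for any $\theta \in (0,2\pi)$, the formula covers reflex angles as well.

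Finally I would assemble the global expansion by partitioning the boundary into a finite union of smooth arcs separated by the corner points $\fp = \{p_j\}$, and covering $\Omega$ by interior patches, smooth-boundary collars, and corner neighborhoods. Over the interior one gets $\frac{|\Omega|}{4\pi t}$; over the smooth collars away from $\fp$ one gets $-\frac{|\pa\Omega|}{8\sqrt{\pi t}} + \frac{1}{12\pi}\int_{\pa\Omega\setminus\fp} k\,ds$ as in the McKean--Singer--Kac computation; and each corner neighborhood contributes the local constant $\frac{\pi^2-\theta_j^2}{24\pi\theta_j}$ by the wedge calculation above. Because all these parametrices agree with the true heat kernel up to exponentially (or at least polynomially) small errors, their traces sum to $h(t)$ with the claimed remainder, giving
\[
h(t) \sim \frac{|\Omega|}{4\pi t} - \frac{|\pa\Omega|}{8\sqrt{\pi t}} + \frac{1}{12\pi}\int_{\pa\Omega\setminus\fp} k\,ds + \sum_{j=1}^n \frac{\pi^2-\theta_j^2}{24\pi\theta_j}.
\]
I expect the main obstacle to be rigorously controlling the error terms in the corner neighborhoods for a \emph{curved} generalized corner: one must show that replacing the genuinely curved edges by their asymptotic tangent rays changes the trace only at subleading order, which requires a careful estimate on the difference of the two heat kernels near $p$ and justification that the curvature of the approaching edges does not leak into the order-one corner coefficient.
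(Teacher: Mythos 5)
Your outline correctly reproduces the standard polygonal machinery: locality of the heat kernel, the Ray--Fedosov--van den Berg--Srisatkunarajah wedge computation giving $\frac{\pi^2-\theta^2}{24\pi\theta}$ for any $\theta\in(0,2\pi)$, and the assembly of interior, collar, and corner patches into the global expansion. That part matches the paper. But the one genuinely new assertion in the theorem --- that a \emph{generalized} corner with curved edges contributes the \emph{same} purely local constant as a straight wedge --- is exactly the step you defer to an unspecified ``scaling/rescaling argument'' and then, in your final paragraph, candidly admit is the main unresolved obstacle. That is a real gap, not a technicality: the locality lemma you invoke compares heat kernels of domains that \emph{coincide} near a point, whereas here the curved boundary and the tangent wedge do not coincide in any neighborhood of $p$, and the corner coefficient is an $O(1)$ term in $t$, so one cannot simply wave at ``lower order'' without a quantitative estimate showing the curvature of the approaching edges does not leak into the constant term.

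The paper closes this gap by a different and more elementary device that your proposal misses entirely: domain monotonicity. One sandwiches $\Omega$ between domains $\Omega_-\subset\Omega\subset\Omega_+$ that agree with $\Omega$ away from an $\epsilon$-neighborhood of $p$ but have honest straight-edged corners there, with opening angles $\theta_-<\theta<\theta_+$. Dirichlet eigenvalue monotonicity gives $h_-(t)\le h(t)\le h_+(t)$, each of $h_\pm$ is computed by the known polygonal-corner argument (locality plus the wedge trace), and then one lets $\epsilon\downarrow 0$ so that $\theta_\pm\to\theta$ and all the geometric quantities converge. The interchange of the limits $\epsilon\downarrow 0$ and $t\downarrow 0$ is justified because the $O(\sqrt t)$ remainders are uniform, depending only on the $\cL^2$ norm of the boundary curvature (Remark 1 of the paper). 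This sandwich replaces the delicate heat-kernel comparison you would otherwise have to carry out. If you want to complete your version of the proof, you should either adopt this monotonicity argument or actually supply the two-kernel estimate near a curved corner; as written, the proposal proves the theorem only for corners with straight edges.
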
 

\begin{proof} 
We shall first prove the theorem for the case in which there is only one generalized corner, as illustrated in Figure 3.  Let $\theta$ denote the opening angle at the corner point $p$.  Consider two domains, $\Omega_+$ and $\Omega_-$ such that away from a neighborhood of radius $\epsilon$ about $p$, the boundaries of these domains coincide with that of $\Omega$.  In a neighborhood of radius $\epsilon/2$ about $p$, $\Omega_+$ and $\Omega_-$ each have a corner with straight edges and respective opening angles $\theta_-<\theta<\theta_+$.  The boundaries of $\Omega_\pm$ are smooth except at the point $p$ and such that 
\[\Omega_- \subset \Omega \subset \Omega_+.\]

By domain monotonicity, the eigenvalues of these domains satisfy
\[\lambda_k (\Omega_+) \leq \lambda_k (\Omega) \leq \lambda_k (\Omega_-), \quad \forall k,\]
and therefore the heat traces satisfy 
\begin{equation} \label{ineq1} h_- (t) = \sum_{k \geq 1} e^{-\lambda_k (\Omega_-)t} \leq  \sum_{k \geq 1} e^{- \lambda_k(\Omega) t} \end{equation} 
\begin{equation} \label{ineq2} = h(t) \leq \sum_{k \geq 1} e^{-\lambda_k (\Omega_+) t} = h_+ (t). \end{equation} 

We use the locality principle of the heat kernel to compute the traces.  Let $H_\pm$ denote the heat kernels for $\Omega_\pm$, respectively.  Let $\chi$ be a discontinuous or sharp cut-off function such that in the neighborhood of radius $\epsilon/2$ about the corner point $p$, denoted by $\cN_\epsilon$, $\chi \equiv 1$, and $\chi\equiv 0$ off this neighborhood.  Let $H_{W_\pm}$ be the heat kernels for infinite wedges of opening angles $\theta_\pm$, respectively.  By Lemma 2.2 of \cite{htap}, the heat traces satisfy
\[ \left| h_\pm(t) - Tr (\chi H_{W_\pm} + (1-\chi) H_\pm)(t) \right| = O(t^{\infty}), \quad t \downarrow 0. \]

In \S 2 of \cite{vdbs} the trace of the heat kernel for an infinite wedge was computed over a finite wedge with the same opening angle as the infinite one.  This trace is
\begin{equation} \label{wedge} \frac{|W|}{4 \pi t} - \frac{|\pa W|}{8 \sqrt{\pi t}} + \frac{\pi^2 - \theta^2}{24 \pi \theta} + O(e^{-c/t}), \end{equation} 
where $|W|$ and $|\pa W|$ denote the area and the length of the sides of the (finite) wedge, and $\theta$ is the opening angle.  Theorem 1 of \cite{vdbs} gives an estimate for the positive constant $c$.

Combining (\ref{wedge}) with the trace of $H_\pm$ away from the corner, as computed in \cite{ms} (see also \cite{wat}), we have 
\[Tr (\chi H_{W_\pm} + (1-\chi) H_\pm)(t) = \frac{|\Omega_\pm|}{4 \pi t} - \frac{|\pa \Omega_\pm|}{8 \sqrt{\pi t}} + \frac{\pi^2 - \theta_\pm ^2}{24 \pi \theta_\pm} \]
\[+ \frac{1}{12 \pi} \int_{\pa \Omega_\pm \setminus \cN_\epsilon} k ds + O(\sqrt{t}). \]
 Therefore, 
 \begin{equation} \label{trace4} h_\pm (t) = \frac{|\Omega_\pm|}{4 \pi t} - \frac{|\pa \Omega_\pm|}{8 \sqrt{\pi t}} + \frac{\pi^2 - \theta_\pm^2}{24 \pi \theta_\pm}  + \frac{1}{12 \pi} \int_{\pa \Omega_\pm \setminus \cN_\epsilon} k ds+ O(\sqrt{t}). \end{equation} 
 
   %\begin{figure} \caption{Approximating domains.}  \begin{center}  \includegraphics[width=50pt]{corners6.png} \end{center} \end{figure} 
 
As  $\epsilon \downarrow 0$, 
\[\frac{|\Omega_\pm|}{4 \pi t} \to \frac{|\Omega|}{4 \pi t}, \quad \frac{|\pa \Omega_\pm|}{8\sqrt{ \pi t}} \to \frac{|\pa \Omega|}{8 \sqrt{\pi t}}, \quad \frac{\pi^2 - \theta_\pm^2}{24 \pi \theta_i} \to \frac{\pi^2 - \theta^2}{24 \pi \theta},\]
\[\int_{\pa \Omega_\pm \setminus \cN_\epsilon} k ds \to \int_{\pa \Omega \setminus \{p\}} k ds.\] 
By Remark 1, the remainders in (\ref{trace4}) are uniformly $O(\sqrt{t})$ as $t \downarrow 0$, due to the boundedness of the curvature of the boundary.  Therefore, the convergence given above, together with (\ref{ineq1}), (\ref{ineq2}), and (\ref{trace4}), show that 
\[h(t) = \frac{|\Omega|}{4 \pi t} - \frac{|\pa \Omega|}{8 \sqrt{\pi t}} + \frac{\pi^2 - \theta^2}{24 \pi \theta} + \frac{1}{12\pi} \int_{\pa \Omega \setminus \{p\}} k ds + O(\sqrt{t}), \quad t \downarrow 0.\]
The same argument applied at each corner together with the locality principle of the heat kernel gives the result in case the domain has a finite number of corners.   
\end{proof}

% \begin{figure} \caption{Approximating wedges for a generalized corner}  \begin{center}  \includegraphics[width=80pt]{corners2.png} \end{center} \end{figure}

If $\pa \Omega$ has corners, then the curvature integral 
\begin{equation} \label{curv-red1} \int_{\pa \Omega \setminus \fp} k ds = \sum_{j=1} ^n \theta_j + \pi(2 \chi(\Omega) - n), \end{equation} 
and therefore 
\begin{equation} \label{red} a_0 = \frac{1}{12}(2\chi(\Omega) - n) + \sum_{j=1} ^n \frac{\pi^2 + \theta_j^2}{24 \pi \theta_j},\end{equation} 
If $\Omega$ has no corners, then the curvature integral is $2 \pi \chi(\Omega)$, and therefore
\[a_0 = \frac{\chi(\Omega)}{6}.  \]
We shall now proceed with the proof of Theorem 1.  

\begin{proof}[of Theorem 1] 
For a domain with at least one corner, the third heat trace invariant $a_0$ is given by (\ref{red}).
For a domain $\Omega'$ with no corners, then the curvature integral is 
\[2 \pi \chi(\Omega') \implies a_0 \leq \frac{1}{6}.\]

Since the domain with corners $\Omega$ is simply connected, 
\[a_0 = \frac{1}{24} \sum_{k=1} ^n \left( \frac{1}{x_k} + x_k \right) - \frac{n}{12} + \frac{1}{6}, \quad x_k = \frac{\theta_k}{\pi}.\]

Consider the function 
\[f: \R^n \to \R, \quad f(x) = \sum_{k=1} ^n \left( \frac{1}{x_k} + x_k \right).\]
If any $x_k \downarrow 0$, then $f \uparrow \infty$, so $f$ does not have a maximum.  The gradient of $f$ vanishes precisely when $x_k =1$ for all $k$.  This is therefore the global minimum of the function $f$ restricted to $x \in \R^n$ such that $x=(x_1, \ldots, x_n)$ with all $x_k > 0$.  

Note that $x_k=1$ corresponds to an interior angle is equal to $\pi$ at the $k^{th}$ corner, which is impossible if there is indeed a corner there.  Consequently, we cannot have $x_k = 1$ for all $k$, and so 
\[f(x) > f(1, \ldots, 1) = 2n.\]
We therefore have
\[a_0 = \frac{1}{24} f(x) - \frac{n}{12} + \frac{1}{6} > \frac{2n}{24} - \frac{1}{12} + \frac{1}{6} > \frac{1}{6}.\]
 \end{proof}

An immediate consequence of our proof is the following.  

\begin{corollary} One can hear corners.  Specifically, a bounded simply connected domain $\Omega \subset \R^2$ with piecewise smooth boundary has at least one corner if and only if 
\[a_0 > \frac{1}{6}.\]
Equivalently, a bounded domain $\Omega \subset \R^2$ has smooth boundary if and only if 
\[a_0 \leq \frac{1}{6}.\]
\end{corollary}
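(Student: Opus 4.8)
The plan is to establish the final Corollary as a direct consequence of the heat trace expansion in Theorem 2 together with the computation already carried out in the proof of Theorem 1. The key observation is that the coefficient $a_0$ is a spectral invariant (being the constant term in the short-time heat trace asymptotics, which is itself determined by the spectrum), so the threshold inequality $a_0 > 1/6$ versus $a_0 \leq 1/6$ is an audible quantity. First I would note that by Theorem 2, for a domain with corner points $\{p_j\}_{j=1}^n$ and interior angles $\theta_j$, the reduction formula \eqref{red} expresses $a_0$ explicitly in terms of the $\theta_j$, the number of corners $n$, and the Euler characteristic $\chi(\Omega)$.

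The two directions of the biconditional then split according to the presence or absence of corners. For the forward direction (at least one corner $\implies a_0 > 1/6$), I would invoke exactly the argument in the proof of Theorem 1: writing $x_k = \theta_k/\pi$ and $f(x) = \sum_k (x_k^{-1} + x_k)$, the function $f$ attains its global minimum $2n$ only at $x = (1,\ldots,1)$, which is excluded because an interior angle of $\pi$ is not a genuine corner; hence $f(x) > 2n$ strictly, yielding $a_0 > 1/6$. For the reverse direction, I would contrapose: if $\Omega$ has smooth boundary and no corners, then the sum over corners is empty and the curvature integral equals $2\pi\chi(\Omega)$, so $a_0 = \chi(\Omega)/6$. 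Since a simply connected planar domain has $\chi(\Omega) = 1$, this gives $a_0 = 1/6$; more generally for any genus, $\chi(\Omega) \leq 1$ so $a_0 \leq 1/6$. The equivalence is then immediate by combining the two implications.

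The main subtlety, which I would want to state carefully rather than gloss over, is that the Corollary as phrased mixes two slightly different claims: the first biconditional is stated for simply connected domains (where $a_0 = 1/6$ exactly in the smooth case), while the second ``equivalently'' clause drops simple connectivity and asserts smooth boundary $\iff a_0 \leq 1/6$ for arbitrary bounded domains. These are consistent only because, across all genera, the smooth case gives $a_0 = \chi(\Omega)/6 \leq 1/6$ while the cornered case gives $a_0 > 1/6$ regardless of genus --- the strict inequality $f(x) > 2n$ in the forward direction never used simple connectivity, only that each excluded angle $\theta_k \neq \pi$. I would therefore present the strict lower bound $a_0 > 1/6$ in the cornered case as holding for every genus, so that the threshold $1/6$ cleanly separates the two classes.

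The step I expect to require the most care is verifying that the strict separation survives at all genera, i.e. confirming that the worst case (largest $a_0$ among smooth domains, namely $\chi = 1$) still falls strictly below the infimum of $a_0$ over cornered domains. Since the cornered bound is $a_0 > \tfrac{2n}{24} - \tfrac{n}{12} + \tfrac{1}{6} = \tfrac{1}{6}$ with strict inequality for every $n \geq 1$, and the smooth bound is $a_0 \leq \tfrac{1}{6}$ with equality exactly when $\chi = 1$, the two ranges meet only at the boundary value $1/6$, which is attained by smooth simply connected domains but never by cornered ones. This is precisely what makes $1/6$ a sharp audible threshold, and I would emphasize that the asymmetry (closed inequality $\leq$ on the smooth side, open inequality $>$ on the cornered side) is what makes the biconditional hold without ambiguity.
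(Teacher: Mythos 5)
Your core argument is the paper's own: the authors present this corollary as an immediate consequence of the proof of Theorem 1, with the forward direction given by the strict minimization $f(x) > f(1,\ldots,1) = 2n$ and the reverse direction given by $a_0 = \chi(\Omega)/6 \leq 1/6$ for smooth boundary. For simply connected domains your proposal handles both directions correctly.

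However, the paragraph you single out as the ``main subtlety'' contains a genuine error. You assert that the cornered case gives $a_0 > 1/6$ ``regardless of genus'' because the strict inequality $f(x) > 2n$ never used simple connectivity. But substituting $f(x) > 2n$ into \eqref{red} yields
\[
a_0 = \frac{1}{24}f(x) - \frac{n}{12} + \frac{\chi(\Omega)}{6} > \frac{\chi(\Omega)}{6},
\]
not $a_0 > 1/6$; the two coincide only when $\chi(\Omega) = 1$. Your displayed bound $a_0 > \tfrac{2n}{24} - \tfrac{n}{12} + \tfrac{1}{6}$ has silently inserted $\chi(\Omega) = 1$, contradicting the claimed genus-independence. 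The claim is in fact false: a domain with $\chi(\Omega) = 0$ (an annulus) carrying a single generalized corner of interior angle $\pi/2$ has, by \eqref{red},
\[
a_0 = -\frac{1}{12} + \frac{\pi^2 + (\pi/2)^2}{24\pi(\pi/2)} = -\frac{1}{12} + \frac{5}{48} = \frac{1}{48} < \frac{1}{6},
\]
so the threshold $1/6$ does not separate cornered from smooth domains once simple connectivity is dropped. This is precisely why the paper's Corollary 1 fixes the genus and compares $a_0$ against $\chi(\Omega)/6$ rather than against the absolute constant $1/6$. The second ``equivalently'' clause should be read as a contrapositive restatement within the same class of simply connected domains: the implication smooth $\Rightarrow a_0 = \chi(\Omega)/6 \leq 1/6$ does hold for every genus, but its converse does not, so your proof should retain the simple connectivity hypothesis in both biconditionals rather than attempting to establish the genus-free version.
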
 

We conclude with the proof of Corollary 1.  
\begin{proof}[of Corollary 1] 
For a connected bounded domain with piecewise smooth boundary and at least one corner, then as above we have $a_0$ given by (\ref{red}), which we can equivalently express as  
\[a_0 = \frac{1}{24} \sum_{k=1} ^n (x_k ^{-1} + x_k)  - \frac{n}{12}+  \frac{\chi(\Omega)}{6},  \quad \theta_k = \pi x_k.\]
It therefore follows that if the Euler characteristic is fixed, by the above arguments, 
\[a_0 > \frac{\chi(\Omega)}{6}.\]
Since for any smoothly bounded domain $\Omega'$ with Euler characteristic identical to that of $\Omega$, we have
\[a_0 (\Omega') = \frac{\chi(\Omega')}{6} = \frac{\chi(\Omega)}{6} < a_0 (\Omega)\]
the result follows.  
\end{proof}

\affiliationone{% in this example, two authors share an institution
  Z. Lu \\
Department of Mathematics \\
University of California, Irvine\\
Irvine, CA 92697 USA \\
   \email{zlu@uci.edu}}
% Important: Do not put any empty line here.

\affiliationone{% in this example, one author has two addresses}
   J. Rowlett\\
  Chalmers University of Technology \\ Mathematical Sciences \\ Gothenburg 41296 Sweden 
   \email{julie.rowlett@chalmers.se}}
\end{document}